
\documentclass{birkjour}
%
%
%
 \newtheorem{thm}{Theorem}[section]
 \newtheorem{cor}[thm]{Corollary}
 \newtheorem{lem}[thm]{Lemma}
 
 \theoremstyle{definition}
 
 \theoremstyle{remark}
 \newtheorem{rem}[thm]{Remark}
 
 \numberwithin{equation}{section}

\begin{document}

%
%
%
%
%
%
%
%
%

\title[ON A GENERALIZATION FOR QUATERNION SEQUENCES]
 {ON A GENERALIZATION FOR \\QUATERNION SEQUENCES}

\author[Serp\.{i}l HALICI]{Serp\.{i}l HALICI}

\address{%
Pamukkale University,\\
Faculty of Arts and Sciences,\\
Department of Mathematics,\\
Denizli/TURKEY}

\email{shalici@pau.edu.tr}

\subjclass{11R52, 11B37, 11B39, 11B83}

\keywords{Quaternion, Horadam Sequence, Generalized Fibonacci Sequence}

\date{2016}

\begin{abstract}
In this study, we introduce a new classes of quaternion numbers. We show that this new classes quaternion numbers include all of quaternion numbers such as Fibonacci, Lucas, Pell, Jacobsthal, Pell-Lucas, Jacobsthal-Lucas quaternions have been studied by many authors. Moreover, for this newly defined quaternion numbers we give the generating function, norm value, Cassini identity, summation formula and their some properties.
\end{abstract}

\maketitle
\section{Introduction}
Quaternions play an important role in quantum physics, computer science and in many areas of mathematics. Also, quaternionic numbers are useful tools to the representations and generalizations of quantities in the high-dimensional physics theory. Classically quaternions are represented in the form of hyper-complex numbers with three imaginary components;
\begin{equation} q=(a_0, a_1, a_2, a_3)=a_0+a_1i+a_2j+a_3k \end{equation}
where $i,j$ and $k$ mutually unit bivectors and $a_0, a_1, a_2, a_3$ are real numbers. The vectors  $i, j, k $ obey the famous multiplication rules; $i^2=j^2=k^2=ijk=-1$, discovered by Hamilton in $1843$. The conjugate of any quaternion $q$ is given by negating the three imaginary components[13]:
\begin{equation} \overline{q}=(a_0,-a_1,-a_2,-a_3)=a_0-a_1i-a_2j-a_3k. \end{equation}
For any quaternions $p$ and $q$, $\overline{q}\overline{p}=\overline{pq}$ can be written. The norm value of a quaternion $q$ is given by the following formula[13]:
\begin{equation} ||q||=q\overline{q}=\overline{q}q. \end{equation}
The modulus of quaternion $q$ is defined as the square root of its norm that is $|q|=\sqrt{||q||}.$
Every non zero quaternion $q$ has a multiplicative inverse that can be stated as follows:
$$q^{-1}=\frac{\overline{q}}{||q||}.$$
Since there is a multiplicative inverse for any non zero quaternion $q$ and for any two quaternions $p$ and $q$ the property $||pq||=||p||||q||$ is satisfied in the quaternion algebra and $\mathbb{H}$ is a normed division algebra. In addition to this we know that $pq\neq qp$. Particulary, more detailed and useful knowledge can be found in [12, 13]. \\\\
Different types of quaternions have been studied by many mathematicians[3, 4, 5, 6, 14, 16, 17]. One of the earliest studies on this subject belong to Horadam[8] where it was defined Fibonacci quaternions. In his study author gave some quaternions recurrence relations. Swamy[15] defined the generalized Fibonacci sequence. In [5] Halici gave the Binet formula, generating function and some identities for Fibonacci quaternions. In[14] Polatli and Kesim investigated the quaternions with generalized Fibonacci and Lucas number components. In [3] Bolat and Ipek studied Pell and Pell-Lucas quaternions. In [16, 17] Liano and Wloch introduced Pell and Jacobsthal quaternions, respectively, but it should be noted that they did not give the Binet formulas of these quaternions. In [2] Catarino studied modified Pell quaternions and gave its norm value, the generating function, Binet formula, Cassini identity.\\\\
Inspired by these works, in this paper we define a new generalization of sequences of quaternions and we call them as Horadam quaternions. It should be noted that this new sequence generalize the Fibonacci, Lucas, Pell, Pell-Lucas, Jacobsthal and Jacobsthal-Lucas quaternions which are studied separately by some authors[5, 13, 3, 16].

\section{\textbf{HORADAM SEQUENCE}}
We start by recalling some fundamental properties of Horadam numbers. Horadam defined the following sequence[7];
\begin{equation} W_n = W_n (a, b; p, q)=pW_{n-1}+qW_{n-2} ;\,\  n\geq 2,\,\ W_0 = a, \,\ W_1 = b \end{equation}
In here $W_n$ is called as the $nth$ Horadam number which is a Fibonacci type number defined recursively by the second order linear recurrence relation. Then its the characteristic equation is $x^2-px-q=0$
and hence the roots of it are $\alpha=\frac{p+\sqrt{p^2+4q}}{2}$ and $\beta=\frac{p-\sqrt{p^2+4q}}{2}$. It is well known that for the Horadam numbers Binet formula is

\begin{equation}W_n=\frac{A\alpha^n-B\beta^n}{\alpha-\beta}; A=b-a\beta, B=b-a\alpha. \end{equation}
In general the sequence $(W_{n})$ is known as Horadam sequence. In fact Horadam sequence gives the well-known special sequences such as Fibonacci, Pell, Jacobsthal, Pell-Lucas, Jacobsthal-Lucas, Tagiuri, Fermat, Fermat-Lucas. For example, some of these sequences are
\begin{equation} (W_n)={W_n(0,1;1,1)}_0^\infty, \,\  (W_n)={W_n(2,1;1,1)}_0^\infty \end{equation}
which are Fibonacci and Lucas sequences, respectively.\\\\
As well as the formula $(2.2)$, for Horadam sequence another Binet formula can be given as in the following Lemma.

\begin{lem}
For Horadam sequence, we have
\begin{equation} W_n=bT_n+aqT_{n-1}\end{equation}
where $a, b, p $ and $q$ are initial values and
\begin{equation} T_n=\frac{1}{\sqrt{p^2+4q}} \left \{  \Big( \frac{p+\sqrt{p^2+4q}}{2} \Big)^n- \Big( \frac{p-\sqrt{p^2+4q}}{2}\Big)^n \right \} \end{equation}
\end{lem}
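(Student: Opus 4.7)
The plan is to derive the claimed identity directly from the Binet formula (2.2), rather than by induction (although induction on $n$ using the recurrence $W_n=pW_{n-1}+qW_{n-2}$ together with $T_n=pT_{n-1}+qT_{n-2}$ would also work and might be cleaner to present as a back-up).

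Starting from
\begin{equation*}
W_n=\frac{A\alpha^n-B\beta^n}{\alpha-\beta}=\frac{(b-a\beta)\alpha^n-(b-a\alpha)\beta^n}{\alpha-\beta},
\end{equation*}
I would collect terms according to the parameters $a$ and $b$. The $b$-part groups immediately into $b\cdot\frac{\alpha^n-\beta^n}{\alpha-\beta}=bT_n$, while the $a$-part becomes
\begin{equation*}
\frac{-a\beta\alpha^n+a\alpha\beta^n}{\alpha-\beta}=-a\alpha\beta\cdot\frac{\alpha^{n-1}-\beta^{n-1}}{\alpha-\beta}=-a\alpha\beta\,T_{n-1}.
\end{equation*}

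The only remaining ingredient is Vieta's formula for the characteristic equation $x^2-px-q=0$, which gives $\alpha\beta=-q$. Substituting this into the previous line turns $-a\alpha\beta\,T_{n-1}$ into $aqT_{n-1}$, and adding the two pieces yields the desired $W_n=bT_n+aqT_{n-1}$.

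There is no real obstacle here; the whole argument is a two-line manipulation once the Binet formula is accepted. The only point that warrants a sentence of justification is the sign convention for the product of roots (the recurrence is written with $+q$, so the quadratic is $x^2-px-q$ and hence $\alpha\beta=-q$, not $+q$); getting this sign right is what makes the $aq$ coefficient come out positive as stated.
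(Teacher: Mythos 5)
Your proof is correct and follows the same route the paper intends: the paper's own proof consists of the single sentence ``From the recurrence relation and Binet formula of Horadam sequence the proof is easily verified,'' and your two-line computation (splitting the Binet formula into the $b$- and $a$-parts and using $\alpha\beta=-q$ from $x^2-px-q=0$) is exactly the verification being alluded to. Your remark about the sign of $\alpha\beta$ is well taken, as that is the only place a slip could occur.
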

\begin{proof}
From the recurrence relation and Binet formula of Horadam sequence the proof is easily verified.
\end{proof}
It should be noted that choosing $(0, 1; p, q)$ and $(2, p; p, q)$ instead of the values $a, b, p $ and $q$ in the formula $(2.4)$, respectively, generalized Fibonacci and Lucas sequences are obtained. That is
\begin{equation} W_n(0,1;p,q)=T_n=\Big( \frac{\alpha^n-\beta^n}{\alpha-\beta} \Big) \end{equation} and
\begin{equation} W_n (2,p;p,q)=T_n=\alpha^n+\beta^n .\end{equation}
Specially, if we write $p= q= 1$ in the equation $(2.5)$ and using the formula $(2.4)$, then we have the Binet formula Fibonacci sequence's;

\begin{equation} W_n(0,1;p,q)=T_n= \frac{1}{\sqrt{5}} \left \{  \Big( \frac{1+\sqrt{5}}{2} \Big)^n- \Big( \frac{1-\sqrt{5}}{2}\Big)^n \right \}. \end{equation}

\section{\textbf{HORADAM QUATERNIONS}}

In this section we define Horadam quaternions which generalize all known quaternions. Also, we give some properties related to these quaternions such as Binet formula, the generating function, Cassini formula and some summation formulas. \\\\
Firstly, for $n\geq 3$ and integers $p, q$ we point out the following definition by Swamy[15].
\begin{equation}
H_n=H_{n-1}+H_{n-2};\,\  H_1=p, \,\  H_2= p+q .\end{equation}
In fact the formula $(3.1)$ is a generalization for Fibonacci numbers.The author gave also the following recurrence relation for this sequence:
\begin{equation}
H_{n+1}=qF_{n}+pF_{n+1}.\end{equation}
And then he provided a new generalization for  Fibonacci quaternions; it is this:
\begin{equation} P_n = H_n + i H_{n+1} + j H_{n+2} + k H_{n+3}. \end{equation}
Moreover, he obtained some useful equations involving these quaternions as follows.
\begin{equation} P_n \overline{P_n} = \overline{P_n} P_n = 3(2pq-q^2)F_{2n+2} + (p^2 + q^2) F_{2n+3}, \end{equation}
\begin{equation} P_n^2 + P_{n-1}^2=2(H_n P_n+ H_{n-1}P_{n-1})-(P_n \overline{P_n} + P_{n-1} \overline{P_{n-1}}), \end{equation}
\begin{equation} P_n\overline{P_n} + P_{n-1}\overline{P_{n-1}}=3(p^2 L_{2n+2}+2pq L_{2n+1} + q^2 L_{2n}). \end{equation}
Now, in a similar to Horadam's definition for the integers greater than zero and zero we will define a new quaternion as follows.
\begin{equation} Q_{w,n+2}=W_{n+2}1+W_{n+3} i +W_{n+4} j +W_{n+5} k \end{equation}
where $W_n$ is the $nth$ generalized Horadam number. Due to its components this new quaternion can be called Horadam quaternion. After some necessary calculations we obtain the following recurrence relation;
\begin{equation} Q_{w,n+2}=pQ_{w,n+1}+qQ_{w,n}. \end{equation}
We will show that this new quaternionic sequence generalize all the type of quaternion sequences.
Let us calculate the initial values of the sequence in the equation $(3.7)$;
\begin{equation} Q_{w,0}=(a, b, pb+qa, p^2b+pqa+qb), \end{equation}
\begin{equation} Q_{w,1}=(b, pb+qa, p^2b+pqa+qb, p^3b+p^2qa+2pqb+q^2a). \end{equation}\\\\
It should be noted that taking $0,1$ $p$ and $1$ instead of $a,b,p,$ and $q$ in the equations $(3.9)$ and $(3.10)$
\begin{equation} Q_{w,0}=(0, 1, p, p^2+1) \,\ \mbox{and} \,\ Q_{w,1}=(1,p,p^2+1,p^3+2p) \end{equation} are obtained, respectively. $Q_{w,0}$ and $Q_{w,1}$ are the initial values of generalization Fibonacci and Lucas quaternions which are given by Polatli and Kesim in [14]. Since the quadratic equation of the recurrence relation $(3.8)$ is $\lambda^2-p\lambda-q=0,$ the roots of this equation are
\begin{equation} \lambda_1=\alpha=\frac{p+\sqrt{p^2+4p}}{2}, \,\  \lambda_2=\beta=\frac{p-\sqrt{p^2+4q}}{2} \end{equation}
In [5] we provided that Binet formula for Fibonacci quaternion is
\begin{equation} Q_n=\frac{1}{\sqrt{5}}(\underline{\alpha} \alpha^n - \underline{\beta} \beta^n) \end{equation}
where $\alpha$ and $\beta$ are the roots of Fibonacci sequence and
\begin{equation} \underline{\alpha}=1+i\alpha+j\alpha^2+k\alpha^3 , \,\ \underline{\beta}=1+i\beta+j \beta^2+k\beta^3 \end{equation}
In the next theorem we will give Binet formula for the Horadam quaternions.
\begin{thm} The Binet formula of Horadam quaternions is
\begin{equation} Q_{w,n}=\frac{1}{\alpha - \beta} (A\underline{\alpha} \alpha^n - B\underline{\beta}\beta^n) =bT_n+ aqT_{n-1} \end{equation}
where $A=b-a\beta, \,\  B=b-a \alpha$ and $ T_{n} $ is as the equation $(2.5)$.
\end{thm}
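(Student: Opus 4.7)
The plan is to obtain both equalities by componentwise substitution, exploiting the fact that both expressions on the right-hand side are already scalar Binet-type formulas for the Horadam numbers $W_n$ proved earlier in the paper.

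First I would start from the definition
\begin{equation*}
Q_{w,n}=W_n\,1+W_{n+1}\,i+W_{n+2}\,j+W_{n+3}\,k
\end{equation*}
and insert the scalar Binet formula (2.2) into every component, writing $W_{n+r}=\frac{A\alpha^{n+r}-B\beta^{n+r}}{\alpha-\beta}$ for $r=0,1,2,3$. Collecting the contributions containing $\alpha^n$ and $\beta^n$ separately, the quaternionic factors $1+i\alpha+j\alpha^{2}+k\alpha^{3}$ and $1+i\beta+j\beta^{2}+k\beta^{3}$ emerge naturally, and these are precisely $\underline{\alpha}$ and $\underline{\beta}$ defined in (3.14). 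This yields
\begin{equation*}
Q_{w,n}=\frac{1}{\alpha-\beta}\bigl(A\underline{\alpha}\alpha^{n}-B\underline{\beta}\beta^{n}\bigr),
\end{equation*}
which establishes the first equality.

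For the second equality I would apply Lemma 2.1 instead of (2.2), substituting $W_{n+r}=bT_{n+r}+aqT_{n+r-1}$ into each component. Rearranging so that $b$ and $aq$ are factored out gives
\begin{equation*}
Q_{w,n}=b\bigl(T_{n}+T_{n+1}i+T_{n+2}j+T_{n+3}k\bigr)+aq\bigl(T_{n-1}+T_{n}i+T_{n+1}j+T_{n+2}k\bigr),
\end{equation*}
which, under the same abuse of notation used in the statement of the theorem (i.e., letting $T_n$ denote the quaternionic sequence whose $n$th term has components $T_{n},T_{n+1},T_{n+2},T_{n+3}$), is exactly $bT_n+aqT_{n-1}$.

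No step is really an obstacle; the whole argument is a bookkeeping exercise in collecting components. The only mildly delicate point is making sure one consistently treats the underlined symbols $\underline{\alpha},\underline{\beta}$ as quaternions (not scalars) when factoring, and that the alternative formula $W_n=bT_n+aqT_{n-1}$ from Lemma 2.1 is used on every one of the four slots with the index shifted appropriately. Since the map $W\mapsto W\,1+(\text{shifts})\,i+\cdots+k$ is $\mathbb{R}$-linear in $W$, both equalities follow at once from their scalar counterparts, and one could even phrase the proof in a single line by invoking this linearity.
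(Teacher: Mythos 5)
Your proposal is correct and follows essentially the same route as the paper: the paper's proof likewise substitutes the scalar Binet formula $W_{n+r}=\frac{A\alpha^{n+r}-B\beta^{n+r}}{\alpha-\beta}$ into each of the four components of $Q_{w,n}$ and collects the $\alpha^n$ and $\beta^n$ terms to produce $\underline{\alpha}$ and $\underline{\beta}$. You go slightly further by explicitly justifying the second equality via Lemma 2.1 and by flagging the abuse of notation in $bT_n+aqT_{n-1}$ (which, read literally against equation $(2.5)$, is a scalar), whereas the paper dismisses this with ``by some direct computations''; that extra care is an improvement, not a divergence.
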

\begin{proof}
Using the definition of Horadam quaternions and its Binet formula we write
\scriptsize { $$Q_{w,n}=\frac{A\alpha^n-B\beta^n}{\alpha-\beta}+\frac{A\alpha^{n+1}-B\beta^{n+1}}{\alpha-\beta}i+\frac{A\alpha^{n+2}-B\beta^{n+2}}{\alpha-\beta}j+\frac{A\alpha^{n+3}-B\beta^{n+3}}{\alpha-\beta}k$$}
\normalsize And then by some direct computations the equation $(3.15)$ is obtained.\end{proof}
In the following Remark we drive the Binet formulas for different types of quaternion sequences.

\begin{rem} We can list results of the Theorem $ (3.1) $  as follows:\\\\
i) In the equation $(3.15)$ if we choose $a = 0, b = 1$ and $ p= q =1$, then we get
 \begin{equation} \scriptsize Q_{w,n}=\frac{1}{\alpha - \beta} (\underline{\alpha} \alpha^n - \underline{\beta}\beta^n); \,\ \underline{\alpha}=1+i\alpha+j\alpha^2+k\alpha^3 , \,\  \underline{\beta}=1+i\beta+j \beta^2+k\beta^3
\end{equation}
which is Binet formula for Fibonacci quaternions. The formula $(3.16)$ is given by Halici in [5].\\\\
ii) In the equation $(3.15)$ by taking $a=0, b=1, p=2 $ and $ q=1 $ we have
\begin{equation} Q_{w,n}=\frac{1}{2\sqrt{2}} ( \underline{\alpha}(1+\sqrt{2})^n-\underline{\beta}(1-\sqrt{2})^n ) \end{equation}
which is the Binet formula for Pell quaternions. It should be noted that the Pell quaternions are studied and its Binet formula is given by Bolat and Ipek in [3] in which this formula the same as $(3. 17) $.\\\\
iii) In the equation $(3.15)$ if we take $a=0, b=1, p=1$ and $ q=2$, then we have
\begin{equation} Q_{w,n}=\frac{1}{\alpha-\beta} ( \underline{\alpha}\alpha^n-\underline{\beta}\beta^n)=\frac{1}{3} ( \underline{\alpha}2^n-\underline{\beta}(-1)^n ) \end{equation}
where $\underline{\alpha}=1+2i+4j+8k$ and $\underline{\beta}=1-i+j-k$. The equation $(3.18)$ gives $nth$ Jacobsthal quaternion. We state that this quaternion type is considered by Szynal-Lianna and Wloch in [17], but they did not give its Binet formula.\\\\
iv) In the equation $(3.15)$ if we choose $a=2, b=1, p=q=1$, then we get the Binet formula of Lucas quaternions as follows.
\begin{equation} Q_{w,n}=\underline{\alpha}\alpha^n+\underline{\beta}\beta^n=K_n \end{equation}
Here \scriptsize { $$\underline{\alpha}=1+i\alpha+j\alpha^2+k\alpha^3;\,\  \underline{\beta}=1+i\beta+j\beta^2+k\beta^3 ; \,\  \alpha=\frac{1+\sqrt{5}}{2}, \,\ \beta=\frac{1-\sqrt{5}}{2}.$$ } \\\\
\normalsize v) If we choose $a=2, b=1, p=2, q=1$ in the equation $(3.15)$ then we have the following formula
\begin{equation} Q_{w,n}=\frac{1}{\alpha-\beta}(\underline{\alpha}(1-2\beta)\alpha^n-\underline{\beta}(1-2\alpha)\beta^n).\end{equation}
The formula $(3.20)$ gives the $nth$  Pell-Lucas quaternion.\\\\
vi) If we define a new quaternion type with Jacobsthal-Lucas number components as follows.
\begin{equation} Q_{jn}=j_n+ij_{n+1}+jj_{n+2}+kj_{n+3} , \end{equation} and if we use the equations $  j_{n+2}=j_{n+1}+2j_n; \,\  j_0=2, j_1=1 $ and $j_n=2^n-(-1)^n$, then we obtain
\begin{equation} Q_{w,n}=(1+2i+4j+8k)\alpha^n - (1-i+j-k) \beta^n \end{equation}
The equation $(3.22)$ gives the $nth$ Jacobsthal-Lucas quaternion.
\end{rem}
Generating functions provide a powerful tool for solving linear homogeneous recurrence relations with constant coefficients.These functions are used firstly in order to solve the Fibonacci recurrence relation by A. De Moivre.
More generally, in the following theorem we present the generating function for Horadam quaternions.
\begin{thm}
The generating function of Horadam quaternions is
\begin{equation} g(t)=\frac{Q_{w,0}+(Q_{w,1}-pQ_{w,0})t}{1-pt-qt^2}. \end{equation}
Here $Q_{w,0}$ and $Q_{w,1}$ are the initial values for Horadam quaternions.
\end{thm}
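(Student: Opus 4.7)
The plan is to mimic the standard derivation of the generating function for a scalar second-order linear recurrence, treating the quaternion-valued coefficients $Q_{w,n}$ formally. Concretely, I would set
$$g(t) = \sum_{n=0}^{\infty} Q_{w,n}\, t^n,$$
and then compute $pt\, g(t)$ and $qt^2 g(t)$ by termwise multiplication. The goal is to form the product $(1 - pt - qt^2)\, g(t)$ and show that all coefficients of $t^n$ with $n \geq 2$ cancel, leaving only the two lowest-order quaternions.

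The key step is the index shift: writing
$$pt\, g(t) = \sum_{n=1}^{\infty} p\, Q_{w,n-1}\, t^{n}, \qquad qt^2 g(t) = \sum_{n=2}^{\infty} q\, Q_{w,n-2}\, t^{n},$$
so that the coefficient of $t^n$ in $(1 - pt - qt^2)\, g(t)$ for $n \geq 2$ is exactly $Q_{w,n} - p\, Q_{w,n-1} - q\, Q_{w,n-2}$. By the recurrence \((3.8)\) this vanishes. The residual low-degree terms give $Q_{w,0}$ as the constant term and $Q_{w,1} - p\, Q_{w,0}$ as the coefficient of $t$. Dividing through by $1 - pt - qt^2$ yields the claimed formula \((3.23)\).

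The one thing worth flagging, which might otherwise look like an obstacle, is the non-commutativity of $\mathbb{H}$. In fact it causes no trouble here because $p$, $q$ and the formal indeterminate $t$ are real scalars, hence central in $\mathbb{H}$; so they commute freely past the quaternion coefficients $Q_{w,n}$, and the scalar-case manipulations go through unchanged. One could alternatively verify the identity componentwise by applying the well-known scalar generating function for the Horadam sequence $W_n$ to each of the four real components of $Q_{w,n}$, but the direct argument above is cleaner and avoids four parallel computations.
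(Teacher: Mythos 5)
Your proposal is correct and follows essentially the same route as the paper: define $g(t)=\sum_{n\ge 0}Q_{w,n}t^{n}$, form $ptg(t)$ and $qt^{2}g(t)$ by index shifting, and use the recurrence $Q_{w,n+2}=pQ_{w,n+1}+qQ_{w,n}$ to cancel all coefficients of $t^{n}$ for $n\ge 2$, leaving $Q_{w,0}+(Q_{w,1}-pQ_{w,0})t$. Your explicit remark that $p$, $q$ and $t$ are real scalars and hence central in $\mathbb{H}$ is a worthwhile clarification the paper leaves implicit, but it does not change the argument.
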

\begin{proof}
Let $g(t)$ be generating function for Horadam quaternions. Then we have
\begin{equation} g(t)=\sum^\infty_{n=0}{Q_{w,n}t^n}. \end{equation}
By multiplying both sides of the equation $(3.24)$ by $pt$ and $qt^2$ we get
$$ptg(t)=\sum^\infty_{n=0}{pQ_{w,n}t^{n+1}} \mbox{ and } qt^2g(t)=\sum^\infty_{n=0}{qQ_{w,n}t^{n+2}}.$$
From the recurrence relation $Q_{w,n+2}=pQ_{w,n+1}+qQ_{w,n}$ we get\\
\begin{equation} g(t)=\frac{Q_{w,0}+(Q_{w,1}-pQ_{w,0})t}{1-pt-qt^2} \end{equation}
is desired.
\end{proof}
In the following Remark we will investigate some special cases of the generating function which is given in $(3.23)$.
\begin{rem} We can list results of the Theorem $ (3.3) $  as follows:\\\\
i) In the equation $(3.25)$  if we write the values $ a=0, b=1, p = q= 1$, then we get
\begin{equation} g(t)=\frac{(0,1,1,2)+(1,0,1,1)t}{1-t-t^2}=\frac{t+i+(1+t)j+(2+t)k}{1-t-t^2}. \end{equation}
The equation $(3.26)$ gives the generating function of Fibonacci quaternions. This function is given by Halici in [5]. \\\\
ii) In the equation $(3.25)$ if we write the values $a=2, b=1, p=q=1$, then we have
\begin{equation}\scriptsize   g(t)=\frac{(2,1,3,5)+(-1,2,2,2)t}{1-t-t^2}=\frac{(2-t)+(1+2t)i+(3+2t)j+(5+2t)k}{1-t-t^2} \end{equation}
which is the generating function for the Lucas quaternions. \\\\
iii) If we write $a=0, b=1, p=2, q=1$ in the equation $(3.25)$, then we have the following function.
\begin{equation} \scriptsize g(t)=\frac{(0,1,2,5)+(1,0,1,2)t}{1-2t-t^2}=\frac{t+i+(2+t)j+(5+2t)k}{1-2t-t^2}. \end{equation}
It should be noted that this type quaternion is studied by Bolat and Ipek in [3], but they did not give the generating function of them. \\\\
iv) In the equation $(3.25)$ if we write the values $a=0, b=1, p=1, q=2$, then we have
\begin{equation} \scriptsize  g(t)=\frac{(0,1,1,3)+(1,0,2,2)t}{1-t-2t^2}=\frac{t+i+(1+2t)j+(3+2t)k}{1-t-2t^2} \end{equation}
which is a generating function for the Jacobsthal quaternions.\\\\
v) If we write $a=2, b=1, p=2, q=1$ in the equation $(3.25)$, then we have
\begin{equation} \scriptsize g(t)=\frac{(2,1,4,9)+(-3,2,1,4)t}{1-2t-t^2}=\frac{(2-3t)+(1+2t)i+(4+t)j+(9+4t)k}{1-2t-t^2} \end{equation}
which is the generating function for the Pell-Lucas quaternions.\\\\
vi) If we write $a=2, b=1, p=1, q=2$ in the equation $(3.25)$, then we have
\begin{equation} \scriptsize  g(t)=\frac{(2,1,5,7)+(-1,4,2,10)t}{1-t-2t^2}=\frac{(2-t)+(1+4t)i+(5+2t)j+(7+10t)k}{1-t-2t^2} \end{equation}
which is the generating function for the Jacobsthal-Lucas quaternions.
\end{rem}
It is well known that the Cassini formula is one of the oldest identities involving the Fibonacci type numbers. In the following theorem we will give the Cassini identity related with the Horadam quaternions.
\begin{thm}
The Cassini formula for Horadam quaternions is follows.
\begin{equation} Q_{w,n-1} Q_{w,n+1} - Q_{w,n}^2=\frac{AB\alpha^{n-1}\beta^{n-1}}{\alpha-\beta}(\beta \underline{\alpha}\underline{\beta}-\alpha \underline{\beta}\underline{\alpha}). \end{equation}
\end{thm}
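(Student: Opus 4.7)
The plan is to substitute the Binet formula from Theorem 3.1 directly and expand, keeping in mind that $\alpha,\beta,A,B$ are real scalars (so they commute with quaternions) while the conjugate-like quaternions $\underline{\alpha}=1+i\alpha+j\alpha^2+k\alpha^3$ and $\underline{\beta}=1+i\beta+j\beta^2+k\beta^3$ do not commute with each other. Explicitly, I would write
\begin{equation*}
Q_{w,n\pm 1}=\frac{1}{\alpha-\beta}\bigl(A\underline{\alpha}\alpha^{n\pm 1}-B\underline{\beta}\beta^{n\pm 1}\bigr),\qquad
Q_{w,n}=\frac{1}{\alpha-\beta}\bigl(A\underline{\alpha}\alpha^{n}-B\underline{\beta}\beta^{n}\bigr),
\end{equation*}
and form $Q_{w,n-1}Q_{w,n+1}$ and $Q_{w,n}^{2}$ as sums of four products each.

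First I would expand both products term by term. The pure $A^{2}\underline{\alpha}^{2}\alpha^{2n}$ and $B^{2}\underline{\beta}^{2}\beta^{2n}$ contributions are identical in $Q_{w,n-1}Q_{w,n+1}$ and in $Q_{w,n}^{2}$, and so they cancel in the difference. What remains, after pulling out the common scalar $AB\alpha^{n-1}\beta^{n-1}/(\alpha-\beta)^{2}$, is
\begin{equation*}
-\underline{\alpha}\underline{\beta}\bigl(\beta^{2}-\alpha\beta\bigr)-\underline{\beta}\underline{\alpha}\bigl(\alpha^{2}-\alpha\beta\bigr),
\end{equation*}
where crucially I preserve the left-to-right order $\underline{\alpha}\underline{\beta}$ vs $\underline{\beta}\underline{\alpha}$ coming from the two cross terms.

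Next I would factor $(\alpha-\beta)$ out of each bracket: $\beta^{2}-\alpha\beta=-\beta(\alpha-\beta)$ and $\alpha^{2}-\alpha\beta=\alpha(\alpha-\beta)$. This cancels one power of $(\alpha-\beta)$ against the denominator $(\alpha-\beta)^{2}$ and leaves
\begin{equation*}
Q_{w,n-1}Q_{w,n+1}-Q_{w,n}^{2}
=\frac{AB\,\alpha^{n-1}\beta^{n-1}}{\alpha-\beta}\bigl(\beta\underline{\alpha}\underline{\beta}-\alpha\underline{\beta}\underline{\alpha}\bigr),
\end{equation*}
which is exactly the claimed identity once we use that $\alpha,\beta$ commute with $\underline{\alpha}\underline{\beta}$ and $\underline{\beta}\underline{\alpha}$ and may be moved to the front.

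The only genuine obstacle is bookkeeping under non-commutativity: one must resist the temptation to write $\underline{\alpha}\underline{\beta}=\underline{\beta}\underline{\alpha}$, since it is precisely this asymmetry that produces the non-scalar right-hand side of the Cassini identity (in contrast to the classical scalar Cassini formula). As long as the ordering of the quaternionic factors is respected throughout the expansion, every other step is a routine scalar manipulation.
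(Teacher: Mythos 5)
Your proposal is correct and follows essentially the same route as the paper: substitute the quaternionic Binet formula, note that the pure $A^{2}\underline{\alpha}^{2}\alpha^{2n}$ and $B^{2}\underline{\beta}^{2}\beta^{2n}$ terms cancel, and factor $(\alpha-\beta)$ out of the order-preserved cross terms. Your explicit attention to the non-commutativity of $\underline{\alpha}$ and $\underline{\beta}$ is exactly the point the paper's computation relies on implicitly.
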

\begin{proof}
Using the Binet formula, it can be shown that
\scriptsize $$\scriptsize Q_{w,n-1} Q_{w,n+1} - Q_{w,n}^2 =\frac{1}{(\alpha-\beta)^2} \big\{ \big( A\underline{\alpha} \alpha^{n-1} - B \underline{\beta} \beta^{n-1} \big) \big(A\underline{\alpha} \alpha^{n+1} - B \underline{\beta} \beta^{n+1} \big)-\big(A\underline{\alpha} \alpha^{n} - B \underline{\beta} \beta^{n} \big)^2 \big\} $$

\scriptsize $$ Q_{w,n-1} Q_{w,n+1} - Q_{w,n}^2 =\frac{1}{(\alpha-\beta)^2}\big(-AB\underline{\alpha}\underline{\beta}\alpha^{n-1}\beta^{n+1}-AB\underline{\beta}\underline{\alpha}\alpha^{n+1}\beta^{n-1} + AB \underline{\alpha}\underline{\beta}\alpha^n\beta^n+AB\underline{\beta}\underline{\alpha}\alpha^n\beta^n\big)$$
\normalsize Thus, we have the following equation.
$$ Q_{w,n-1} Q_{w,n+1} - Q_{w,n}^2 =\frac{AB\alpha^{n-1}\beta^{n-1}}{\alpha-\beta}(\beta \underline{\alpha}\underline{\beta}-\alpha \underline{\beta}\underline{\alpha}) $$
\end{proof}
Consequently, we can give the following Remark which gives some special cases of the Horadam quaternions's Cassini formula.
\begin{rem} We can interpret results of the Theorem $ (3.5) $  as follows:\\\\
In fact in the equation $(3.32)$ writing the values $A, B $ and $ \alpha, \beta $  for Fibonacci quaternions one can get the following equation.
\begin{equation}\scriptsize  Q_{w,n-1} Q_{w,n+1} - Q_{w,n}^2=\frac{AB\alpha^{n-1}\beta^{n-1}}{\alpha-\beta}(\beta \underline{\alpha}\underline{\beta}-\alpha \underline{\beta}\underline{\alpha})=(-1)^n (2+2i+4j+3k). \end{equation}
From [5] we know that
\begin{equation}\scriptsize  Q_{w,n-1} Q_{w,n+1} - Q_{w,n}^2=(-1)^n (2Q_1 -3 k) \end{equation}
where $Q_n$ is $nth$ Fibonacci quaternion. After the needed calculations one can see that the above last two equations are the same. Thus,
$$ Q_{w,n-1} Q_{w,n+1} - Q_{w,n}^2=(-1)^n (2Q_1 -3 k)= (-1)^n (2+2i+4j+3k) $$ is written.
 \\\\
Similarly, in the formula $(3.32)$ writing the needed values for Pell quaternions
\begin{equation} Q_{w,n-1} Q_{w,n+1} - Q_{w,n}^2=\frac{(-1)^{n+1}}{4}(\underline{\alpha}\underline{\beta}(\alpha^2+2)-\underline{\beta}\underline{\alpha}\beta^2)\end{equation}
is obtained. This last formula is the Cassini formula for Pell quaternions which is given by Bolat and Ipek in [3]. \end{rem}
A summation formula for Horadam quaternions $ Q_{w,n}$ can be given as follows.
\begin{thm}
A summation formula for Horadam quaternions is
\begin{equation} \sum_{k=0}^n{Q_{w,k}}=\frac{1}{\alpha-\beta}\big( \frac{B\underline{\beta}\beta^{n+1}}{1-\beta}-\frac{A\underline{\alpha}\alpha^{n+1}}{1-\alpha} \big) + K\end{equation}
where $A=b-a\beta, B=b-a\alpha$ and
\scriptsize $$  K=\frac{(a+b-ap)+i(b+aq)+j(bp+aq+bq)+k[b(p^2+q)+(a+b)pq+aq^2]}{1-p-q}.$$
\end{thm}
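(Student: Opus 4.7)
The plan is to start from the Binet formula for Horadam quaternions established in Theorem 3.1,
\begin{equation*}
Q_{w,k} \;=\; \frac{1}{\alpha-\beta}\bigl(A\underline{\alpha}\,\alpha^{k} - B\underline{\beta}\,\beta^{k}\bigr),
\end{equation*}
and interchange the finite summation with the fixed quaternionic coefficients $\underline{\alpha}$ and $\underline{\beta}$. Because these coefficients do not depend on $k$, summing $Q_{w,k}$ for $k=0,\dots,n$ reduces to evaluating two ordinary scalar geometric series in $\alpha$ and $\beta$.

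Next, I would apply the identity $\sum_{k=0}^{n} r^{k} = (1-r^{n+1})/(1-r)$ with $r=\alpha$ and $r=\beta$ (which is legitimate because $1-p-q \neq 0$ is implicit in the statement of $K$) to obtain
\begin{equation*}
\sum_{k=0}^{n} Q_{w,k} \;=\; \frac{1}{\alpha-\beta}\left[A\underline{\alpha}\,\frac{1-\alpha^{n+1}}{1-\alpha} \;-\; B\underline{\beta}\,\frac{1-\beta^{n+1}}{1-\beta}\right].
\end{equation*}
Splitting each fraction into an $n$-dependent piece and a constant piece immediately produces the first summand of the claimed identity, and it remains to show
\begin{equation*}
\frac{1}{\alpha-\beta}\left(\frac{A\underline{\alpha}}{1-\alpha} - \frac{B\underline{\beta}}{1-\beta}\right) \;=\; K.
\end{equation*}

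The main obstacle is verifying this last equality, since $K$ is stated explicitly component by component rather than in Binet form. I would combine the two fractions on the left over the common denominator $(1-\alpha)(1-\beta) = 1-(\alpha+\beta)+\alpha\beta = 1-p-q$, using Vieta's relations $\alpha+\beta=p$ and $\alpha\beta=-q$ for the characteristic equation $x^{2}-px-q=0$. This rewrites the constant as
\begin{equation*}
\frac{A\underline{\alpha}(1-\beta) \;-\; B\underline{\beta}(1-\alpha)}{(\alpha-\beta)(1-p-q)}.
\end{equation*}
Expanding $\underline{\alpha}=1+i\alpha+j\alpha^{2}+k\alpha^{3}$, $\underline{\beta}=1+i\beta+j\beta^{2}+k\beta^{3}$, and $A=b-a\beta$, $B=b-a\alpha$, I would compute the four quaternion components of the numerator in turn. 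In each component the factor $\alpha-\beta$ cancels against the denominator by symmetric function identities ($\alpha+\beta=p$, $\alpha\beta=-q$, $\alpha^{2}+\alpha\beta+\beta^{2}=p^{2}+q$, $\alpha^{2}\beta^{2}=q^{2}$). Reading off the coefficients of $1,i,j,k$ yields exactly $a+b-ap$, $b+aq$, $bp+aq+bq$, and $b(p^{2}+q)+(a+b)pq+aq^{2}$, matching the numerators in the stated formula for $K$ and finishing the proof.
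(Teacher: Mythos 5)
Your proposal is correct and follows essentially the same route as the paper's own proof: apply the Binet formula, pull the constant quaternion coefficients $A\underline{\alpha}$ and $B\underline{\beta}$ out of the sum, evaluate the two geometric series, and separate the $n$-dependent part from the constant $K$. The only difference is that you spell out the component-by-component verification of $K$ via $(1-\alpha)(1-\beta)=1-p-q$ and the symmetric-function identities, which the paper compresses into ``some straightforward computations''; your expanded version checks out.
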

\begin{proof}
Using the Binet formula of Horadam quaternions we can state
$$\sum_{k=0}^n{Q_{w,k}}=\sum_{k=0}^n \frac{1}{\alpha-\beta} \big(A\underline{\alpha}\alpha^n - B\underline{\beta} \beta^n \big)=\frac{A\underline{\alpha}}{\alpha-\beta} \sum_{k=0}^n{\alpha^n}-\frac{B\underline{\beta}}{\alpha-\beta} \sum_{k=0}^n{\beta^n}.$$
By the aid of the values $A=b-a\beta, \,\ B=b-a\alpha$ and the definition of geometric series we get
$$\sum_{k=0}^n{Q_{w,k}}=\frac{(b-a\beta) \underline{\alpha}(1-\alpha^{n+1})}{(\alpha-\beta)(1-\alpha)}-\frac{(b-a\alpha) \underline{\beta}(1-\beta^{n+1})}{(\alpha-\beta)(1-\beta)}$$
From some straightforward computations it follows that
$$\sum_{k=0}^n{Q_{w,k}}=\frac{1}{\alpha-\beta}\big( \frac{B\underline{\beta}\beta^{n+1}}{1-\beta}-\frac{A\underline{\alpha}\alpha^{n+1}}{1-\alpha} \big) + K$$
which is desired.
\end{proof}
We will show that the sum formula in the equation $(3.36)$ is a generalization for all the other quaternion types. For this purpose we give the next Corollary.

\begin{cor} In the equation $(3.36)$ the sum formula is a generalization for all quaternion types.

\end{cor}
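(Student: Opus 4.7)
The plan is to verify the corollary case-by-case, mirroring the structure of Remarks $3.2$ and $3.4$ which establish the analogous specialization for the Binet formula and generating function. Since Theorem $3.7$ already expresses $\sum_{k=0}^{n} Q_{w,k}$ purely in terms of the parameters $a,b,p,q$ (through $A,B,\alpha,\beta,\underline{\alpha},\underline{\beta}$, and the constant $K$), the task reduces to specializing these parameters to the six classical choices already listed in the paper and checking that the result agrees with the corresponding sum formula (where one exists in the literature) or else simply producing it as a new identity.

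First, I would rewrite formula $(3.36)$ in a form more convenient for substitution. Using $1-\alpha-\beta-\alpha\beta = 1-p+q$ (from Vieta's relations $\alpha+\beta=p$, $\alpha\beta=-q$) and $(1-\alpha)(1-\beta) = 1-p-q$, I would combine the two rational terms in $(3.36)$ over the common denominator $1-p-q$, thereby exhibiting $\sum_{k=0}^{n} Q_{w,k}$ as a single closed expression in $n, a, b, p, q$. This common-denominator form will make the specialization step essentially mechanical.

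Next, I would go through the six cases in turn, in parallel with Remark $3.2$:
\begin{itemize}
\item[(i)] $(a,b,p,q) = (0,1,1,1)$ gives Fibonacci quaternions, with $\alpha,\beta = (1\pm\sqrt{5})/2$, $A=B=1$, and $K$ reducing to $1 + i + 2j + 3k$-type initial data; the resulting identity should coincide with the Fibonacci quaternion sum already in the literature.
\item[(ii)] $(0,1,2,1)$ for Pell quaternions.
\item[(iii)] $(0,1,1,2)$ for Jacobsthal quaternions.
\item[(iv)] $(2,1,1,1)$ for Lucas quaternions.
\item[(v)] $(2,1,2,1)$ for Pell-Lucas quaternions.
\item[(vi)] $(2,1,1,2)$ for Jacobsthal-Lucas quaternions.
\end{itemize}
In each case, after substituting the values into $A=b-a\beta$, $B=b-a\alpha$, $\underline{\alpha}=1+i\alpha+j\alpha^{2}+k\alpha^{3}$, $\underline{\beta}=1+i\beta+j\beta^{2}+k\beta^{3}$, and into the expression for $K$, I would collect real and imaginary components and confirm the resulting sum agrees with the known scalar sum identities for $W_n$ applied componentwise (for instance, $\sum_{k=0}^{n} F_k = F_{n+2}-1$ for the Fibonacci case, extended to each slot of $Q_{w,k}$).

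The main obstacle is not any deep mathematical issue but the bookkeeping of the constant term $K$: in the cases with $a\neq 0$ (Lucas, Pell-Lucas, Jacobsthal-Lucas), $K$ has eight distinct contributions in its four quaternion components, and one must be careful to cancel the appropriate pieces against the $1/(1-\alpha)$ and $1/(1-\beta)$ denominators. I would handle this by verifying once and for all, before entering the casework, that the closed form derived from combining $(3.36)$ over $(1-p)(1-q)$... sorry, over $(1-\alpha)(1-\beta) = 1-p-q$ agrees with the recursive identity $\sum_{k=0}^{n} Q_{w,k} = \frac{1}{1-p-q}\bigl( Q_{w,0} + Q_{w,1} - p Q_{w,0} - Q_{w,n+1} - q Q_{w,n}\bigr)$, which follows immediately by telescoping $(3.8)$. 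Once this telescoped identity is in place, each of the six specializations collapses to a short calculation with the initial values $Q_{w,0},Q_{w,1}$ listed in $(3.9)$--$(3.10)$, completing the proof of the corollary.
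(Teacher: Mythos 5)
Your proposal is correct and, at its core, follows the same strategy as the paper: verify the corollary by specializing the parameters $(a,b,p,q)$ to the classical choices and checking agreement with the known sum formulas. The paper carries this out explicitly only for three cases (Fibonacci with $(0,1,1,1)$, modified Pell with $(0,2,2,1)$, and Jacobsthal with $(0,1,1,2)$), computing $K$ and the geometric-series term directly from the Binet form in each instance and then asserting ``likewise'' for the rest. Where you genuinely differ is in the preliminary step: you first combine $(3.36)$ over the common denominator $(1-\alpha)(1-\beta)=1-p-q$ and check it against the telescoped identity $\sum_{k=0}^{n}Q_{w,k}=\frac{1}{1-p-q}\bigl((1-p)Q_{w,0}+Q_{w,1}-Q_{w,n+1}-qQ_{w,n}\bigr)$, which follows from summing the recurrence $(3.8)$. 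This buys you two things the paper does not have: each of the six specializations reduces to substituting the initial values $(3.9)$--$(3.10)$ rather than re-expanding Binet expressions and geometric series case by case, and the quaternionic bookkeeping in $K$ (delicate when $a\neq 0$) is done once rather than six times. The only caveats are minor: the telescoped form requires $1-p-q\neq 0$, which is also implicitly required for $K$ in $(3.36)$ and holds in all six cases, and your Pell specialization $(0,1,2,1)$ differs from the paper's choice $(0,2,2,1)$, which affects only which named sequence is exhibited, not the validity of the argument.
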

\begin{proof}
Firstly, we write the values $a=0, b=p=q=1$ in the equation $(3.36)$. Then  we have
$$ K=\frac{1+i+2j+3k}{-1}=-QF_1$$
where $QF_1$ is $1st$ Fibonacci quaternion. And then using the formula $$\frac{1}{\alpha-\beta}\big( \frac{B\underline{\beta}\beta^{n+1}}{1-\beta}-\frac{A\underline{\alpha}\alpha^{n+1}}{1-\alpha} \big)$$ we get
$$\frac{1}{\sqrt{5}}\big( \underline{\alpha} \alpha^{n+2} - \underline{\beta} \beta^{n+2} \big) =QF_{n+2}$$
where $QF_{n+2}$ is $(n+2)th$ Fibonacci quaternion. Thus, we have this:
$$ \sum_{k=0}^n{Q_{w,k}}=\frac{1}{\alpha-\beta}\big( \frac{B\underline{\beta}\beta^{n+1}}{1-\beta}-\frac{A\underline{\alpha}\alpha^{n+1}}{1-\alpha} \big) + K=QF_{n+2}-QF_1.$$
The last equation is a summation formula for Fibonacci quaternion and this formula is given by Halici in [5].

Now if we consider the Pell quaternions that is when $ a=0, b=p=2, q=1, \alpha-\beta =2\sqrt{2}$, then we obtain that
$$K=\frac{1+i+3j+7k}{-2}=\frac{-1}{2}QPL_0, $$
where $QPL_0$ is $0th$ modified Pell quaternion. It should be noted that, in [3], Bolat and Ipek introduced this quaternion type as Pell-Lucas quaternion, but because of the initial values it must be the modified Pell quaternion (see, [2]).\\\\
And using by the formula $\frac{1}{\alpha-\beta}\big( \frac{B\underline{\beta}\beta^{n+1}}{1-\beta}-\frac{A\underline{\alpha}\alpha^{n+1}}{1-\alpha} \big)$ one can obtain this;\\\\
$$\frac{1}{\alpha-\beta}\big( \frac{B\underline{\beta}\beta^{n+1}}{1-\beta}-\frac{A\underline{\alpha}\alpha^{n+1}}{1-\alpha} \big)=\frac{1}{2}\big( \frac{ \underline{\alpha} \alpha^{n+1} + \underline{\beta} \beta^{n+1}}{2} \big)=\frac{1}{2}QPL_{n+1},$$\\\\
where $QPL_{n+1}$ is $(n+1)th$ modified Pell quaternion. As a result we obtain that
$$\sum_{k=0}^n{Q_{w,k}}=\frac{1}{\alpha-\beta}\big( \frac{B\underline{\beta}\beta^{n+1}}{1-\beta}-\frac{A\underline{\alpha}\alpha^{n+1}}{1-\alpha} \big) + K=\frac{1}{2}(QPL_{n}-QPL_0).$$
So, this result is the same as the result which is given by Bolat and Ipek in [3].\\\\
Now, if we write the values $a=0, b=1, p=1, q=2$ in $(3.36)$ then we have
$$K=\frac{(1+i+3j+7k)}{-2}=\frac{-1}{2}QJ_1$$
where $QJ_1$ is $1st$ Jacobsthal quaternion.\\\\
From the formula $$\frac{1}{\alpha-\beta}\big( \frac{B\underline{\beta}\beta^{n+1}}{1-\beta}-\frac{A\underline{\alpha}\alpha^{n+1}}{1-\alpha} \big)$$ we write
$$\frac{1}{\alpha-\beta}\big( \frac{B\underline{\beta}\beta^{n+1}}{1-\beta}-\frac{A\underline{\alpha}\alpha^{n+1}}{1-\alpha} \big)=\frac{1}{3}(\underline{\alpha} 2^{n+2}+\underline{\beta} (-1)^{n+1})=QJ_{n+2}$$\\\\
So, we obtain the following formula.
$$\sum_{k=0}^n{QJ_{w,k}}=QJ_{n+2}-\frac{1}{2}QJ_1$$
Likewise using the equation $(3.36)$ one can easily prove that the other summation formulas. So, we conclude that the formula $(3.36)$ is the general case of the other sum formulas.\end{proof}
\begin{thm}
The norm value of Horadam quaternions $Q_{w,n}$ is follows.
\begin{equation}
Nr^2(Q_{w,n})=\frac{1}{p^2+4q}(b^2A+2abqB+a^2q^2C)
\end{equation}
where $A, B, C$ are follows.
$$A=\alpha^{2n} (1+\alpha^2+\alpha^4+\alpha^6)+\beta^{2n}(1+\beta^2+\beta^4+\beta^6)-2(-q)^n(1-q+q^2-q^3),$$
$$B=\alpha^{2n-1} (1+\alpha^2+\alpha^4+\alpha^6)+\beta^{2n-1}(1+\beta^2+\beta^4+\beta^6)-p(-q)^{n-1}(1+q+q^2+q^3),$$
$$C=\alpha^{2n} (1+\alpha^2+\alpha^4+\alpha^{-2})+\beta^{2n}(1+\beta^2+\beta^4+\beta^{-2})-2(-q)^n(1-q+q^2-(-q)^{-1}).$$
\end{thm}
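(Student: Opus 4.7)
The plan is to unpack the norm as a sum of four squares and then reduce everything to the second Binet identity from Lemma~2.1. Writing $Q_{w,n}=W_n+W_{n+1}i+W_{n+2}j+W_{n+3}k$ and using $Q_{w,n}\overline{Q_{w,n}}=W_n^2+W_{n+1}^2+W_{n+2}^2+W_{n+3}^2$, I would take the quantity $Nr^2(Q_{w,n})$ to mean this sum of squared components. Substituting $W_{n+k}=bT_{n+k}+aqT_{n+k-1}$ and expanding gives
\[
Nr^2(Q_{w,n})=b^2\!\!\sum_{k=0}^{3}T_{n+k}^2+2abq\sum_{k=0}^{3}T_{n+k}T_{n+k-1}+a^2q^2\!\!\sum_{k=0}^{3}T_{n+k-1}^2,
\]
which already exhibits the $b^2$, $2abq$ and $a^2q^2$ weights that appear in the statement. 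So it will be enough to identify the three inner sums with $\frac{1}{p^2+4q}A$, $\frac{1}{p^2+4q}B$ and $\frac{1}{p^2+4q}C$ respectively.

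Next I would insert the explicit Binet form $T_m=(\alpha^m-\beta^m)/(\alpha-\beta)$ into each sum, together with the identities $\alpha+\beta=p$, $\alpha\beta=-q$ and $(\alpha-\beta)^2=p^2+4q$ coming from the characteristic equation $x^2-px-q=0$. Squaring gives
\[
T_m^2=\frac{\alpha^{2m}+\beta^{2m}-2(-q)^m}{p^2+4q},
\]
and multiplying consecutive terms gives
\[
T_mT_{m-1}=\frac{\alpha^{2m-1}+\beta^{2m-1}-p(-q)^{m-1}}{p^2+4q}.
\]
Summing over $k=0,1,2,3$ and factoring out $\alpha^{2n}$, $\beta^{2n}$, $(-q)^n$ (or $\alpha^{2n-1}$ etc.\ in the cross term) produces exactly the geometric factors $1+\alpha^2+\alpha^4+\alpha^6$ and $1+\beta^2+\beta^4+\beta^6$ that appear in $A$ and $B$. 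For the third sum one factors out $\alpha^{2n}$ from $\alpha^{2n-2}(1+\alpha^2+\alpha^4+\alpha^6)$, which produces $1+\alpha^2+\alpha^4+\alpha^{-2}$, matching the form of $C$.

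The only genuine obstacle is bookkeeping: the three sums must be aligned to the common base exponent $2n$ (or $(-q)^n$) before the statement's shapes of $A$, $B$, $C$ become visible, and the shift from $T_{n+k-1}^2$ to $\alpha^{2n-2}$-form is what produces the negative power $\alpha^{-2}$ and the $(-q)^{-1}$ correction in $C$. Once these geometric-series manipulations are carried out and the common factor $1/(p^2+4q)$ is pulled out of all three sums, the three bracketed quantities fall into place and the identity follows directly. No further ingredient beyond Lemma~2.1 and the elementary identities $\alpha+\beta=p$, $\alpha\beta=-q$, $(\alpha-\beta)^2=p^2+4q$ is needed.
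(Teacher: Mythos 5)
Your proposal follows exactly the paper's route: write $Nr^2(Q_{w,n})$ as the sum of the four squared components, substitute $W_m=bT_m+aqT_{m-1}$ to extract the $b^2$, $2abq$, $a^2q^2$ weights, and evaluate the three inner sums via the Binet form of $T_m$ together with $\alpha+\beta=p$, $\alpha\beta=-q$, $(\alpha-\beta)^2=p^2+4q$ --- the paper merely suppresses the intermediate computation that you spell out. One caveat: carrying your geometric sums to the end gives the factor $(1-q+q^2-q^3)$ in $B$ and $+(-q)^{-1}$ in the last bracket of $C$, so the printed $(1+q+q^2+q^3)$ and $-(-q)^{-1}$ appear to be sign slips in the theorem's statement rather than something your argument fails to reach.
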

\begin{proof}
Using the definitions norm and Binet formula $W_n=bT_n+aqT_{n-1}$ one can write
$$Nr^2(Q_{w,n})=W^2_n+W^2_{n+1}+W^2_{n+2}+W^2_{n+3}$$
\scriptsize {$$Nr^2(Q_{w,n})=(bT_n+aqT_{n-1})^2+(bT_{n+1}+aqT_{n})^2+(bT_{n+2}+aqT_{n+1})^2+(bT_{n+3}+aqT_{n+2})^2.$$ }
\normalsize Also, using the values $\alpha+\beta=p, \alpha-\beta=\sqrt{p^2+4q}, \alpha\beta=-q$ and
$$ T_n= \frac{1}{\sqrt{p^2+4q}} \big\{ \big(\frac{p+\sqrt{p^2+4q}}{2} \big)^n - \big(\frac{p-\sqrt{p^2+4q}}{2} \big)^n \big\}$$
we can conclude that
$$Nr^2(Q_{w,n})=\frac{1}{p^2+4q}(b^2A+2abqB+a^2q^2C)$$ which is desired result.
\end{proof}
\begin{rem}
Some special cases of Theorem $(3.9)$ can be listed as follows.\\\\
i) Writing the values $a=0, b=p=q=1, \alpha - \beta =\sqrt{5}$ in the equation $(3.37)$ we find
$$Nr^2(QF_n)=\frac{1}{5} (\alpha^{2n} (15+6\sqrt{5})+\beta^{2n}(15-6\sqrt{5})),$$
which is the norm value of Fibonacci quaternions.\\\\
ii) If we write the values $a=0, b=q=1, p=2, \alpha=1+\sqrt{2}, \beta=1-\sqrt{2}$ in the equation $(3.37)$, then we get
$$Nr^2(QP_n)=\frac{1}{8} (\alpha^{2n} (120+84\sqrt{2})+\beta^{2n}(120-84\sqrt{2}))$$
which is the norm value for Pell quaternions and it is given by Anetta and Iwona in [16].\\\\
iii) If we write the values $a=0, b=1, p=1, q=2, \alpha-\beta =3$ in the equation $(3.37)$, then we have
$$Nr^2(QJ_n)=\frac{1}{9} (85(2)^{2n} +10(-1)^n (2)^{n} +4)$$
The last equation gives the norm value for Jacobsthal quaternions and this value is calculated by Anetta and Wloch in [17].\\\\
In the same way using the equation $(3.37)$ one can easily prove the other norm values.
\end{rem}
\maketitle
\section{\textbf{Conclusion}}
In this paper, firstly we have defined the sequence of  Horadam quaternions that generalize all the other quaternions  by a recurrence relation of second order. And then we have presented some properties involving these sequence. Moreover, for these quaternions we give the Binet formula, generating function, Cassini identity, norm value and sum formula. In the future, we intend to introduce  Horadam octonions and give fundamental properties for octonions of this type.

\end{document}